\documentclass[a4paper,12pt]{article}
\usepackage{times, url}
\textheight 24.5cm
\textwidth 16.3cm
\oddsidemargin -0.1in
\evensidemargin 0.in
\topmargin -1.8cm
\usepackage{xcolor}

\usepackage{tikz}
\usepackage{graphicx,amsmath,amssymb,amsthm,amsfonts,bm,float,cite}
\newtheorem{theorem}{Theorem}[section]
\newtheorem{conjecture}{Conjecture}[section]
\newtheorem{definition}{Definition}[section]

\theoremstyle{definition}
\newtheorem{example}{Example}[section]

\usepackage{graphicx}
\usepackage{longtable}

\usepackage[none]{hyphenat}

\usepackage[center]{caption}
\captionsetup[table]{labelsep=period}
\captionsetup[figure]{labelsep=period}

\begin{document}
\begin{center}
{\LARGE \bf  The $p$-adic valuation of the general degree-$2$ and degree-$3$ polynomial in $2$ variables} 
\vspace{8mm}

{\Large \bf Shubham}
\vspace{3mm}

 School of Physical Sciences, Jawaharlal Nehru University\\ 
New Delhi, India \\
e-mail: \url{Shubha76_sps@jnu.ac.in}
\vspace{2mm}
\end{center}
{\bf Abstract:}  This paper investigates the $p$-adic valuation trees of degree-$2$ and degree-$3$ polynomials in two variables over any prime $p$, building upon prior research outlined in \cite{fourteen}.\\
{\bf Keywords:} $p$-adic valuation, valuation tree, polynomial sequences. \\ 
{\bf 2020 Mathematics Subject Classification:}11B99, 11D88, 11B83,11G16
\vspace{5mm}

\section{Introduction}
This paper explores the $p$-adic valuation tree of degree-$2$ and degree-$3$  polynomials in two variables, building upon the foundational work discussed in \cite{fourteen}. We extend the results of \cite{fourteen} to encompass degree-$2$ and degree-$3$ polynomials in two variables over any prime $p$, leveraging the $p$-adic valuation tree to analyze the $p$-adic valuations of sequences. We say the $p$-adic valuation $\nu_p(f(x,y))$ of a polynomial $f(x,y)$ has a \textit{closed-form} if its $p$-adic valuation tree has no infinite branch.
In \cite{fourteen}, it was established that the $p$-adic valuation $\nu_p(f(x, y))$ admits a closed-form when the equation $f(x, y) = 0$ has no solution in $\mathbb{Q}_p \times \mathbb{Q}_p$. Additionally, it was shown that the set $V_p(f) = {\nu_p(f(n,m)) : 0 \leq m \leq n, n \in \mathbb{N}}$, where $f(x, y) \in \mathbb{Z}[x, y]$, exhibits periodic behavior if and only if $f$ has no zeros in $\mathbb{Z}_p \times \mathbb{Z}_p$. In the periodic case, the minimal period is a power of $p$.
The main contributions of this paper, encapsulated in Theorems \ref{theorem 3.1} and \ref{theorem 4.1}, focus on the $p$-adic valuation trees of degree-$2$ and degree-$3$ polynomials in two variables, respectively. Theorem \ref{theorem 3.1} details the structure of the $p$-adic valuation tree for degree-$2$ polynomials, while Theorem \ref{theorem 4.1} extends this analysis to degree-$3$ polynomials.
The subsequent section provides an overview of the applications and known results concerning the $p$-adic valuations of various sequences.

\section{Related Work}
For $n \in \mathbb{N}$, the exponent of the highest power of a prime $p$ that divides $n$ is called the $p$-adic valuation of the $n$.  This is denoted by $\nu_p(n)$.  Legendre establishes the following result about $p$-adic valuation of $n!$ in \cite{one} \begin{center}{
     $\nu_p(n!) = \sum\limits_{k=1}^{\infty}\left \lfloor \frac{n}{p^k} \right\rfloor = \frac{n-s_p(n)}{p-1}$,}
\end{center}where $s_p(n)$ is the sum of digits of $n$ in base $p$.  It is observed in \cite{two} that $2$-adic valuation of central binomial coefficient is   $s_2(n)${, that is \begin{center}
$\nu_2(a_n)$ = $s_2(n)$, where $a_n$ = $\binom{2n}{n}$.
\end{center}}It follows from here that $a_n$ is always an even number and $a_n/2$ is odd when $n$ is a power of $2$.  This expression is called a \textit{closed form} in \cite{two}. The definition of closed form depends on the context.  This has been discussed in \cite{three, four}. 
\par The work presented in \cite{two} forms part of a general project initiated by Moll and his co-authors to analyse the set \begin{center}
 $V_x = \{ \nu_p(x_n):n \in \mathbb{N}\}$,
\end{center}for a given sequence $x =\{x_n\} $ of integers.  \\ The $2$-adic valuation of $\{n^2-a\}$ is studied in \cite{two}. It is shown that $n^2-a, a\in \mathbb{Z}$ has a simple closed form when $a \not \equiv 4,7 \mod 8$.  For these two remaining cases the valuation is quite complicated.  It is studied by the notion of the \textit{valuation tree}. 

Given a polynomial $f(x)$ with integer coefficients, the sequence $\{\nu_2(f(n))\}$ is described by a tree. This is called the valuation tree attached to the polynomial $f$.  The vertices of this tree correspond to some selected subsets \begin{center}
$C_{m,j} = \{2^mi+j: i \in \mathbb{N}\},\quad 0\leq j<2^m$
\end{center}starting with the root node $C_{0,0} = \mathbb{N}$.  The procedure to select classes is explained below in Example \ref{example: zero}.  Some notation for the vertices of this tree are introduced as follows: \begin{definition} A residue class $C_{m,j}$ is called \textit{terminal} if $\nu_2(f(2^mi+j))$ is independent of $i$.  Otherwise, it is called \textit{non-terminal}.  The same terminology is given to vertices corresponding to the class $C_{m,j}$.  In the tree,  terminal vertices are labelled by their constant valuation and non-terminal vertices are labelled by a $*$. \end{definition}
{\begin{example}\label{example: zero} Construction of valuation tree of $x^2 + 5$ is as follows:
note that $\nu_2(1+5) = 1$ and $\nu_2(2+5$) is $0$. So node $v_0$ is non terminating.  Hence it splits into two vertices and forms the first level.  These vertices correspond to the residue classes $C_{1,0}$ and $C_{1,1}$.  We can check that both these nodes are terminating with valuation $0$ and $1$. The valuation tree of $x^2+5$ is shown in Figure \ref{fig:1}.\end{example}
 \begin{figure}   \begin{center}  \begin{tikzpicture}
      [sibling distance=8em,level distance=3em,
      every node/.style={shape=circle,draw= black,align=right}]
        \node{$v_0$}
        child{node{0}
        } 
        child{node{1}}
              ;
        
      \end{tikzpicture}\end{center}
       \caption{The valuation tree of $x^2+5$} \label{fig:1}   \end{figure}
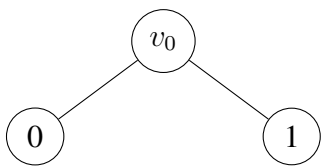 \par The main theorem of \cite{two} is as follows:\begin{theorem}
      Let $v$ be a non-terminating node at the $k$-th level for the valuation tree of $n^2 + 7$. Then $v$ splits into two vertices at the $(k + 1)$-level.  Exactly one of them terminates, with valuation $k$. The second one has valuation at least $k + 1$.
      \end{theorem}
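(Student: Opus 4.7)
The plan is to reduce the problem to the arithmetic of the two $2$-adic roots of $x^2+7=0$ and then read off the tree structure from their ultrametric configuration. Since $-7 \equiv 1 \pmod{8}$, Hensel's lemma produces exactly two units $\alpha,-\alpha\in\mathbb{Z}_2$ with $\alpha^2=-7$. Over $\mathbb{Z}_2$ this gives the factorisation $n^2+7=(n-\alpha)(n+\alpha)$ and hence
\[
\nu_2(n^2+7) \;=\; \nu_2(n-\alpha)+\nu_2(n+\alpha).
\]
Because $\alpha$ is a $2$-adic unit, $\nu_2(2\alpha)=1$, so the ultrametric inequality forces, for every odd $n\in\mathbb{Z}$, exactly one of $\nu_2(n\pm\alpha)$ to equal $1$ while the other may be arbitrarily large.

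The next step is to translate the hypothesis that $v=C_{k,j}$ is non-terminating into a statement about $\alpha$. Since $\nu_2(n^2+7)=0$ whenever $n$ is even, $j$ must be odd; and since the valuation varies on $C_{k,j}$, one of $\pm\alpha$ must be congruent to $j$ modulo $2^k$. Assuming $\alpha\equiv j\pmod{2^k}$ (the case $-\alpha\equiv j\pmod{2^k}$ is completely symmetric), write $\alpha-j=2^k u$ with $u\in\mathbb{Z}_2$. For $n=2^k i+j$ one then obtains $n-\alpha = 2^k(i-u)$ and, from $j+\alpha\equiv 2\alpha\pmod{2^k}$, $\nu_2(n+\alpha)=1$. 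Substituting into the factorisation gives
\[
\nu_2(n^2+7) \;=\; 1 + k + \nu_2(i-u).
\]

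Finally, the splitting of $v$ at the next level amounts to fixing the parity of $i$. Exactly one parity makes $i-u$ odd, producing a child on which $\nu_2(n^2+7)$ is the constant $1+k$, hence terminal; the other parity makes $i-u$ even, so the valuation is strictly larger and continues to vary with the remaining parameter, producing a non-terminating child whose valuation is at least $2+k$. This is precisely the structural dichotomy asserted by the theorem, with the exact numerical value on the terminal branch then recovered from the paper's convention for indexing levels.

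The main obstacle is the opening Hensel step: one has to know that $-7$ is a genuine square in $\mathbb{Z}_2$, and it is precisely this property that forces the tree to admit an infinite branch in the first place, in contrast to $n^2+5$ (where $-5$ is not a $2$-adic square and the tree terminates after one level, as in Example~\ref{example: zero}). Once the two roots $\pm\alpha$ are in hand, the rest of the argument is a uniform parity bookkeeping on the integer $i$, which applies to every non-terminating node simultaneously and avoids the need for any separate induction on the level $k$.
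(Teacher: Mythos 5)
First, note that the paper you are working from does not actually prove this statement: it is quoted verbatim as the main theorem of \cite{two}, so there is no in-paper proof to compare against. Your route --- using that $-7\equiv 1 \pmod 8$ is a square in $\mathbb{Z}_2$, factoring $n^2+7=(n-\alpha)(n+\alpha)$, and reading the tree off the ultrametric position of $n$ relative to the two roots --- is a legitimate and conceptually clean way to attack it, and it is genuinely different in spirit from the computations this paper uses for its own Theorems \ref{theorem 3.1} and \ref{theorem 4.1}, which expand $f(b_{k-1}+p^k i_k,\,c_{k-1}+p^k j_k)$ and analyse a linear congruence mod $p$. The core identity $\nu_2(n^2+7)=\nu_2(n-\alpha)+\nu_2(n+\alpha)$ and the parity bookkeeping in $i$ are correct for sufficiently deep nodes, and they do explain the one-terminal/one-non-terminal dichotomy without induction.

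However, there are concrete gaps. (a) Your step ``$\nu_2(n+\alpha)=1$'' uses $n+\alpha=2\alpha+2^k(i-u)$ together with $\nu_2(2\alpha)=1<k$, so it requires $k\geq 2$; at $k=1$ both $\pm\alpha$ are congruent to $j$ mod $2$, the argument breaks down, and in fact with the level convention fixed in Section 2 of this paper (the class $C_{m,j}$ has level $m$) the node $C_{1,1}$ splits into $C_{2,1}$ and $C_{2,3}$, \emph{both} non-terminating (e.g.\ valuations $3,5,3,4,\dots$ and $4,3,7,3,\dots$). So the claim that your argument ``applies to every non-terminating node simultaneously'' is too strong; the low levels need separate treatment, or the level convention of \cite{two} must be pinned down. (b) Your own formula gives constant valuation $1+k$ on the terminal child and $\geq k+2$ on the other (check: $C_{2,1}$ at level $2$ has terminal child $C_{3,1}$ with valuation $3$), whereas the statement asserts valuation exactly $k$; deferring this discrepancy to ``the paper's convention for indexing levels'' leaves precisely the quantitative part of the theorem unproved --- you must either reconcile the indexing explicitly or prove the exact value claimed. (c) Two asserted steps need short justifications: that non-terminality of $C_{k,j}$ forces $j\equiv\pm\alpha \pmod{2^k}$ (otherwise $\nu_2(j-\alpha)<k$ and $\nu_2(j+\alpha)<k$, so both factors, hence the valuation, are constant on the class), and that the child with $i\equiv u\pmod 2$ really is non-terminating (use that $\alpha\in\mathbb{Z}_2\setminus\mathbb{Z}$, so $\nu_2(i-u)$ is finite but unbounded on that class). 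These last points are easily filled, but as written the proof establishes the structural dichotomy only for $k\geq 2$ and does not deliver the stated valuation $k$.
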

     \par The $2$-adic valuation of the Stirling numbers is discussed in \cite{five}. The numbers $S(n,k)$ are the number of ways to partition a set of $n$ elements into exactly $k$ non-empty subsets where $n \in \mathbb{N}$ and $0\leq k \leq n$. These are explicitly given by \begin{center}
      $S(n,k) = \frac{1}{k!}\sum\limits_{i=0}^{i=k-1}(-1)^{i}\binom{k}{i}(k-i)^{n}$,
\end{center}or, by the recurrence\begin{center}
$S(n,k) = S(n-1,k-1) + kS(n-1,k)$,
\end{center}with initial condition $S(0,0) = 1$ and $S(n,0) = 0 $ for $n>0$.  \\The $2$-adic valuation of $S(n,k)$ can be easily determined for $1\leq k \leq 4$ and closed form expression is given as follows:\begin{center}
$v_2(S(n,1)) =0 = v_2(S(n,2))$,\\
\end{center}
      \begin{center}
        \[
   v_2(S(n,3))= 
\begin{cases}
   1,& \text{if } n \hspace*{.2cm}  \text{is}\hspace*{.2cm} \text{even}\\
    0,              & \text{otherwise}
\end{cases}
\]
      \end{center}
     \begin{center}
        \[
   v_2(S(n,4))= 
\begin{cases}
   1,& \text{if } n \hspace*{.2cm} \text{is}\hspace*{.2cm} \text{odd}.\\
    0,              & \text{otherwise}
\end{cases}
\]
      \end{center}
      The important conjecture described there is that the partitions of $\mathbb{N}$ in classes of the form \begin{center} $C_{m,j}^{(k)} = \{2^mi+j:$ $i \in \mathbb{N}$ and starts at the point where $2^mi+j \geq k$ \} \end{center} leads to a clear pattern for $v_2(S(n,k))$ for $k$ $\in \mathbb{N}$ is fixed. We recall that the parameter $m$ is called the level of the class.   The main conjecture of \cite{five} is now stated:
      \begin{conjecture}Let $k \in\mathbb{N}$ be fixed.  Then we conjecture that \begin{itemize}
 \item[(a)] 
there exists a level $m_0(k)$ and an integer $\mu(k)$ such that for any $m\geq m_0(k)$,  the number of non-terminal classes of level $m$ is $\mu(k)$, independently of $m$;
\item[(b)] moreover, for each $m \geq m_0(k)$, each of the $\mu(k)$ non-terminal classes splits into one terminal and one non-terminal subclass. The latter generates the next level set.
   \end{itemize}
      \end{conjecture}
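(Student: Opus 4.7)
The plan is to replace the Stirling numbers with their explicit exponential-sum presentation
$$k!\,S(n,k)=\sum_{j=1}^{k}(-1)^{k-j}\binom{k}{j}\,j^{n},$$
so that the $2$-adic valuation of $S(n,k)$ is controlled, up to the constant $\nu_2(k!)$, by that of an exponential sum $T_k(n)=\sum_{j=1}^{k}c_j j^{n}$ with integer coefficients $c_j$. First I would split $T_k=A+B$ where $A$ collects the odd-base terms. Since $\nu_2(j^{n})=n\,\nu_2(j)$ grows linearly in $n$ whenever $j$ is even, there is a threshold $N_{0}(k)$ beyond which $\nu_2(B(n))>\nu_2(A(n))$, so the tree of $\nu_2(S(n,k))$ for $n\ge N_0(k)$ agrees with the tree of $\nu_2(A(n))$. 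Since perturbing finitely many values of $n$ only alters finitely many nodes, the conjecture reduces to the corresponding claim for $A(n)$.

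The next step exploits the fact that for each odd $j$,
$$j^{\,n+2^{m}i}=j^{n}\bigl(1+2^{m+e_{j}}u_{j,m}\bigr)^{i},\qquad u_{j,m}\in\mathbb{Z}_{2}^{\times},$$
for all $m$ large enough, where $e_{j}\in\{0,1\}$ is determined by $j\bmod 4$ (a standard lifting-the-exponent computation in $\mathbb{Z}_2$). Substituting into $A(n+2^{m}i)$ and expanding yields
$$A(n+2^{m}i)=A(n)+2^{m+e}\,i\,L(n)+O(2^{2m})$$
for all $m\ge m_{0}(k)$, where $e$ is an integer constant and $L$ is an exponential sum of the same form as $A$ but with coefficients modified by factors involving $2^{e_j-e}u_{j,m}$. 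This expansion is the engine that drives the whole argument.

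With the expansion in hand, parts (a) and (b) reduce to a tracking argument on the tree. A class $C_{m,j}$ is non-terminal precisely when $\nu_{2}(A(j))>\nu_{2}(L(j))+m+e$, and in that case exactly one of the two children $C_{m+1,j}$ or $C_{m+1,\,j+2^{m}}$ corresponds to the value of $i\in\{0,1\}$ that forces cancellation of the leading linear term and hence strictly raises the valuation, producing a new non-terminal; the other child freezes at the terminal valuation $\nu_{2}(L(j))+m+e$. Iterating this from level $m_{0}(k)$ upward shows that non-terminal classes persist in bijection with their parents from one level to the next, giving the stable count $\mu(k)$ in (a) and the one-terminal/one-non-terminal split in (b).

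The main obstacle is the uniform control of the constants across different dominant bases. Different odd $j$ contribute at different effective $2$-adic scales $e_{j}$, so one must show that at depth $m_{0}(k)$ the interactions among them settle into a single Newton-polygon-like regime rather than producing sporadic cancellations at arbitrarily deep levels. Ruling this out amounts to proving that the function $n\mapsto \nu_2(A(n))$ admits only finitely many ``branches'' in its $2$-adic description, and I expect this rigidification to be the technically hardest step; it is essentially a quantitative non-cancellation statement for the distinct exponentials $j^n$ viewed as continuous characters $\mathbb{Z}_2\to\mathbb{Z}_2^\times$, and it is plausibly the reason the conjecture has remained open for general $k$.
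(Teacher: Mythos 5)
The statement you are trying to prove is not a theorem of this paper at all: it is the main conjecture of Amdeberhan--Manna--Moll on the $2$-adic valuation of the Stirling numbers $S(n,k)$, quoted here only as background, and the paper explicitly notes it has been established only for $k=5$. So there is no paper proof to compare against, and your proposal should be judged as an attempted resolution of an open problem. As such it has a genuine gap, and you in fact name it yourself: the entire tracking argument in your third paragraph presupposes that at every sufficiently deep level the dichotomy ``exactly one child raises the valuation, the other freezes'' holds, which requires uniform control of $\nu_2(L(j))$ relative to $\nu_2(A(j))$, i.e.\ that the relevant $2$-adic analytic branches of $n\mapsto A(n)$ have only finitely many zeros, all simple, detectable by level $m_0(k)$. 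That is not a technical detail to be deferred; it is equivalent to the conjecture (simplicity of the zeros is precisely what forces the one-terminal/one-non-terminal split in (b), and finiteness plus a count of those zeros is what gives the stable number $\mu(k)$ in (a)). Your proposal also never produces the base-level count $\mu(k)$: even granting the dichotomy, the argument only shows the number of non-terminal classes is preserved from one level to the next, not that a stabilizing level $m_0(k)$ exists.

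Two further points need repair even in the reduction steps. First, the claim that there is a threshold $N_0(k)$ with $\nu_2(B(n))>\nu_2(A(n))$ for all $n\ge N_0(k)$ is not sound as stated: if the odd-base sum $A$ has a $2$-adic zero (which is exactly the interesting case, since otherwise the valuation is eventually periodic and the tree is trivial), then $\nu_2(A(n))$ is unbounded along a subsequence of integers $n$, so the comparison must be made classwise on the tree, not pointwise in $n$, and the error analysis has to be redone at each non-terminal class. Second, the expansion $A(n+2^m i)=A(n)+2^{m+e}iL(n)+O(2^{2m})$ mixes the scales $e_j$ of the different odd bases; when the minimal $e_j$ is attained by several $j$ the ``leading'' coefficient $L(n)$ can itself degenerate, and ruling out such sporadic cancellation at arbitrarily deep levels is again the same rigidity statement you concede is the hardest step. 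In short, the plan is a reasonable and standard reduction (split off even bases, use the analyticity of $j^n$ in $n$ for odd $j$, analyze zeros branchwise, e.g.\ via Strassmann-type bounds), but the step that would actually prove (a) and (b) is missing, which is consistent with the statement remaining a conjecture.
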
This conjecture is only established for the case $k = 5$.
      A similar conjecture is given in \cite{six} for the $p$-adic valuation of the Stirling numbers.\par From the point of view of application, the $2$-adic valuation tree is used to solve quadratic and cubic diophantine equations in \cite{eight}. Apart from this,  some other special cases are studied in \cite{nine,ten,eleven}.
     \par In \cite{fourteen},  the set \begin{center}
      $V_f$ = $\{ \nu_p(f(m,n)): m,n \in \mathbb{N}\}$,
\end{center}where $f(x,y) \in \mathbb{Z}[x,y]$,  is discussed by the generalized notion of the \textit{valuation tree}.  The definition of the $p$-adic valuation tree in \cite{fourteen} is as follows:         
 \begin{definition} Let $p$ be a prime number. Consider the integers $f(x,y)$ for every $(x,y)$ in $\mathbb{Z}^2$. The $p$-adic\textit{ valuation tree} of $f$ is a rooted, labelled $p^2$-ary tree defined recursively as follows:
\begin{enumerate}

   \item[1.] Suppose that  $v_{0}$ be a root vertex at level $0$. There are $p^2$ edges from this root vertex to its $p^2$ children vertices at level $1$. These vertices correspond to all possible residue classes  $(i_{0},j_{0}) \mod p$.  Label the vertex corresponding to the class $(i_{0},j_{0}) $ with $0$ if $f(i_{0},j_{0}) \not\equiv 0 \pmod p$ and with $*$ if $f(i_{0},j_{0}) \equiv 0 \pmod p$.  \item[2.] If the label of a vertex is $0$, it does not have any children. 
    
    \item[3.] If the label of a vertex is $*$, then it has $p^2$ children at level $2$. These vertices correspond to the residue classes $ (i_{0} + i_{1}p, j_{0} + j_{1}p)\mod p^2$ where $i_{1},j_{1} \in\{0,1,2,\dots ,p-1\} $ and $(i_{0},j_{0}) \mod p$ is the class of the parent vertex.\end{enumerate}    
     
     This process continues recursively so that at the $l^{\textrm{th}}$ level, there are $p^2$ children of any non-terminating vertex in the previous level $(l-1)$, each child of which corresponds to the residue classes  $(i_{0} + i_1p +\cdots+i_{l-1}p^{l-1}, j_{0}+j_1p+ \cdots + j_{l-1}p^{l-1})\pmod{p^l}$.  Here $i_{l-1}, j_{l-1}$ $\in \{0,1,2,\dots,p-1\}$ and $(i_{0} + i_1p +\cdots + i_{l-2}p^{l-2}, j_{0} + j_1p + \cdots + j_{l-2}p^{l-2}) \pmod{p^{l-1}}$ is the class of the parent vertex.  Label the vertex corresponding to the class $(i,j)$ with $l-1$ if $f(i,j) \not \equiv 0 \pmod {p^l}$ and $*$ if $f(i,j)  \equiv 0 \pmod {p^l}$.  Thus $ i = i_{0} + i_1p + \dots +i_{l-1}p^{l-1},  j = j_{0} + j_1p + \dots +j_{l-1}p^{l-1}$. \end{definition}
    
     \begin{example} Valuation tree of $x^2 + y^2 + xy +x + y +1$ is shown in Figure \ref{fig:2}.\end{example}
\begin{figure}     \begin{center} \begin{tikzpicture}
      [sibling distance=4em,level distance=2em,
      every node/.style={shape=circle,draw= black,align=right}]
        \node{}
        child{node{0}} 
        child{node{0}}
        child{node{0}}
       child{node{*}child{node{1}}
               child{node{1}}
               child{node{1}}
               child{node{1}}}        ;
        
      \end{tikzpicture}\end{center}   \caption{The valuation tree of $x^2 + y^2 + xy +x + y +1$}\label{fig:2}\end{figure}
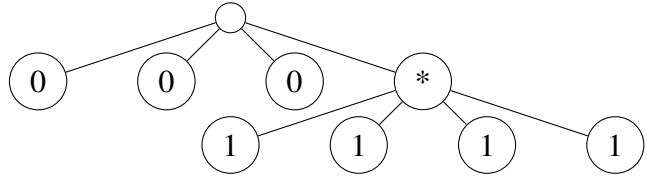

      So \begin{center}
        \[
   \nu_2(x^2+y^2+xy+x+y+1)= 
\begin{cases}
   1,& \text{if both $x, y$ are odd.} \\
    0,              & \text{otherwise}
\end{cases}
\]
      \end{center}
\section{$p$-adic Valuation Tree of Degree-$2$ Polynomial}
The $2$-adic valuation tree is given in \cite{fourteen}.  We extend this result for general prime $p$.
\begin{theorem}\label{theorem 3.1}
Consider the $p$-adic valuation tree of the general bivariate polynomial$f(x, y) \in \mathbb{Z}[x,y]$ of degree two, and let $v$ be a vertex labelled with $*$ at level $k \geq 1$, $p > 2$. Then, at the next level $(k+1)$, vertex $v$ splits into $p^2$ vertices. There are following possibilities about the labelling of these nodes:
\begin{itemize}
\item[1.] All nodes are labelled with $*$.
\item[2.] All nodes are labelled with $k+1$.
\item[3.] Exactly $p$ nodes are labelled with $*$ and rest $p^2-p$ are labelled with $k+1$.
\end{itemize}
\end{theorem}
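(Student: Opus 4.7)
The plan is to exploit the fact that for a degree-$2$ polynomial, the Taylor expansion of $f$ around $(i_0, j_0)$ is finite and its second-order part carries a factor of $p^{2k}$, which vanishes modulo $p^{k+1}$ whenever $k \geq 1$. This should reduce the problem to a single linear congruence in two variables over $\mathbb{F}_p$, and the three cases in the statement will then correspond to the three possible shapes of its solution set.

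First I would write $f(x,y) = ax^2 + bxy + cy^2 + dx + ey + g$ and let the starred vertex $v$ correspond to a residue class $(i_0, j_0) \pmod{p^k}$, so that $p^k \mid f(i_0, j_0)$. The $p^2$ children correspond to the pairs $(i_0 + i_1 p^k,\, j_0 + j_1 p^k)$ for $i_1, j_1 \in \{0,1,\ldots,p-1\}$, and a direct expansion gives
\[
f(i_0 + i_1 p^k,\, j_0 + j_1 p^k) \;=\; f(i_0, j_0) \;+\; p^k\bigl(i_1 f_x(i_0, j_0) + j_1 f_y(i_0, j_0)\bigr) \;+\; p^{2k}\bigl(a i_1^2 + b i_1 j_1 + c j_1^2\bigr).
\]
Using $2k \geq k+1$ to kill the last term modulo $p^{k+1}$ and writing $f(i_0, j_0) = p^k M$ with $M \in \mathbb{Z}$, I would reduce the condition that a child is labelled $*$ to the linear congruence
\[
\alpha\, i_1 + \beta\, j_1 + M \equiv 0 \pmod p,
\]
where $\alpha \equiv f_x(i_0, j_0)$ and $\beta \equiv f_y(i_0, j_0)$ modulo $p$.

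The remaining step is a short case analysis on this congruence over $\mathbb{F}_p$. If $\alpha \equiv \beta \equiv 0 \pmod p$, the equation is either trivially satisfied (when $M \equiv 0$), giving all $p^2$ starred children (case~1), or never satisfied (when $M \not\equiv 0$), giving all $p^2$ children labelled $k+1$ (case~2). Otherwise $(\alpha, \beta) \neq (0,0)$ in $\mathbb{F}_p$, so the solution set is an affine line, which contains exactly $p$ points; this produces case~3. I do not foresee any genuine obstacle: the argument rests on one Taylor-expansion observation plus a linear-algebra count, and the degree-$2$ hypothesis is used precisely to ensure that the quadratic correction in the expansion is suppressed by $p^{2k}$. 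The hypothesis $p > 2$ plays no essential role in this argument and presumably appears only because the prime $p = 2$ was already handled in \cite{fourteen}.
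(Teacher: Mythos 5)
Your proposal is correct and follows essentially the same route as the paper: substitute the child representatives $(i_0+i_1p^k,\,j_0+j_1p^k)$, use $p^k\mid f$ at the parent to drop the $p^{2k}$ quadratic correction modulo $p^{k+1}$, and reduce the $*$-condition to the linear congruence $M+\alpha i_1+\beta j_1\equiv 0\pmod p$, whose coefficients $\alpha,\beta$ are exactly the combinations $2a_{20}i_0+a_{11}j_0+a_{10}$ and $2a_{02}j_0+a_{11}i_0+a_{01}$ appearing in the paper's equation \eqref{equ 4}. The only real difference is that you close the argument by a clean, exhaustive trichotomy on whether $(\alpha,\beta)\equiv(0,0)\pmod p$ (and then on $M$), whereas the paper instead cases on $(i_0,j_0)$ and on the valuations of the individual coefficients $a_{lm}$, which is less tidy and does not exhaustively enumerate the coefficient configurations; your observation that $p>2$ is not actually needed is also accurate, the restriction existing only because $p=2$ was treated in \cite{fourteen}.
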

\begin{proof}
Let $f(x,y) = \sum_{l+m \leq 2} a_{lm}x^ly^m, l, m \in \mathbb{Z}$ be the general degree-$2$ polynomial in two variables and $(b_{k-1},c_{k-1})$ be associated with the vertex $v$ at the $k$-th level of the valuation tree.  So we have $$
   f(b_{k-1},c_{k-1}) \equiv 0 \pmod {p^k}, $$ where
    $b_{k}= (i_{k}i_{k-1}\cdots i_1i_0)_p  = b_{k-1} + p^ki_k,
      c_{k}= (j_{k}j_{k-1}\cdots j_1j_0)_p = b_{k-1} + p^ki_k$. Here $i_{0},i_{1},\dots, i_{k}, j_{0},j_{1},\dots,j_{k} \in \{0,1,\dots,p-1\}$ and $(b_0,c_0) = (i_0,j_0)$. We want to find $(i_k,j_k)$ such that \begin{equation}
   f(b_k,c_k) \equiv 0 \pmod {p^{k+1}}. 
    \end{equation} On putting the expression for $(b_k,c_k)$ into above equation we get
    \begin{equation}
    \sum_{l+m \leq 2} a_{lm}(b_{k-1} + p^ki_k)^l(c_{k-1} + p^kj_k)^m \equiv 0 \pmod p^{k+1}
    \end{equation} On solving above equation and using $f(b_{k-1}, c_{k-1}) = \alpha p^k$ for some integer $\alpha$, we get \begin{equation}
    \alpha + 2a_{20}i_kb_{k-1} + a_{11}(i_kc_{k-1} + j_kb_{k-1}) + a_{10}i_k + a_{01}j_k + 2a_{02}j_kc_{k-1} \equiv 0 \pmod p.
    \end{equation}Further by using $b_{k-1} \equiv i_0 \pmod p$ and $c_{k-1} \equiv j_0 \pmod p$ in the above equation, we get \begin{equation}\label{equ 4}
    \alpha + (2a_{20}i_0 + a_{11}j_0 + a_{10})i_k + (2a_{02}j_0 + a_{11}i_0 + a_{01})j_k \equiv 0 \pmod p.
    \end{equation}Based on the values of $(i_0,j_0),$ we can have following cases:
    \begin{itemize}
    \item[Case 1.] If $(i_0,j_0) = (0,0)$ then equation \eqref{equ 4} becomes \begin{equation}
     \alpha + a_{10}i_k + a_{01}j_k \equiv 0 \pmod p.
    \end{equation} In this case if atleast one of $a_{10}$ or $a_{01}$ has $p$-adic valuation zero, then the above equation has $p$ solutions. Hence there are $p$ nodes with label $*$ and $p^2-p$ with label $k+1$ at the level $k+1$. Further if $v_p(a_{10})$ and $v_p(a_{01})$ are greater than one then depending upon $\alpha$, either all nodes are labelled with $*$ or $k+1$ at the level $k+1$.
    \item[Case 2] If $(i_0,j_0) \neq (0,0)$ and $\nu_p(a_{ij}) \geq 1$ then equation \eqref{equ 4} becomes \begin{equation}
    \alpha \equiv 0 \pmod p. 
    \end{equation}
    Hence in this case based upon $\alpha$, either all nodes at level $k+1$ are labelled with $*$ or $k+1$. 
    \item[Case 3.] if  $(i_0,j_0) \neq (0,0)$ and exactly one of $a_{20}, a_{11}, a_{10},a_{02}, a_{11}, a_{01}$ has $p$-adic valuation zero then equation \eqref{equ 4} has exactly $p$ solutions. Hence in this case also exactly $p$ nodes are labelled with $*$ and rest $p^2-p$ are labelled with $k+1$.
    \end{itemize}
    The above three cases constitutes the proof of the theorem.
\end{proof}
\section{$p$-adic valuation tree of degree-$3$ polynomial}
The following theorem gives $p$-adic valuation tree of the general degree-$3$ polynomial for all primes $p$.
\begin{theorem}\label{theorem 4.1}
Consider the $p$-adic valuation tree of the general bivariate polynomial$f(x, y) \in \mathbb{Z}[x,y]$ of degree three, and let $v$ be a vertex labelled with $*$ at level $k \geq 1$. Then, at the next level $(k+1)$, vertex $v$ splits into $p^2$ vertices. There are following possibilities about the labelling of these nodes:
\begin{itemize}
\item[1.] All nodes are labelled with $*$.
\item[2.] All nodes are labelled with $k+1$.
\item[3.] Exactly $p$ nodes are labelled with $*$ and rest $p^2-p$ are labelled with $k+1$.
\end{itemize}
\end{theorem}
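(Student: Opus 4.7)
The plan is to follow the strategy of the proof of Theorem \ref{theorem 3.1}, adapted to degree three. I would write the general degree-$3$ polynomial as $f(x,y) = \sum_{l+m \leq 3} a_{lm} x^l y^m$, associate the pair $(b_{k-1}, c_{k-1})$ with the starred vertex $v$ at level $k$ (so that $f(b_{k-1}, c_{k-1}) \equiv 0 \pmod{p^k}$), and parametrize the $p^2$ children via $b_k = b_{k-1} + p^k i_k$, $c_k = c_{k-1} + p^k j_k$ with $i_k, j_k \in \{0,1,\ldots,p-1\}$. A child is labelled $*$ exactly when $f(b_k, c_k) \equiv 0 \pmod{p^{k+1}}$, so the task is to count the pairs $(i_k, j_k)$ that satisfy this congruence.

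The key computation is to expand each monomial $(b_{k-1} + p^k i_k)^l(c_{k-1} + p^k j_k)^m$ by the binomial theorem. Because $l+m \leq 3$ the expansion is short, and every term other than the constant $b_{k-1}^l c_{k-1}^m$ and the two terms linear in $i_k$ or $j_k$ carries a factor of $p^{2k}$, which vanishes modulo $p^{k+1}$ since $2k \geq k+1$ for $k \geq 1$. Writing $f(b_{k-1}, c_{k-1}) = \alpha p^k$ and dividing by $p^k$, the requirement $f(b_k,c_k) \equiv 0 \pmod{p^{k+1}}$ collapses to
\[
\alpha + f_x(b_{k-1}, c_{k-1})\, i_k + f_y(b_{k-1}, c_{k-1})\, j_k \equiv 0 \pmod p,
\]
where $f_x, f_y$ are the formal partial derivatives of $f$. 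Since $b_{k-1} \equiv i_0$ and $c_{k-1} \equiv j_0 \pmod p$, the coefficients reduce further to $A := f_x(i_0, j_0)$ and $B := f_y(i_0, j_0)$ modulo $p$, so counting the children amounts to counting the solutions $(i_k, j_k) \in \{0, \ldots, p-1\}^2$ of the linear congruence $A i_k + B j_k \equiv -\alpha \pmod p$.

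I would finish with the standard trichotomy for a linear equation over $\mathbb{F}_p$. If $A \equiv B \equiv 0 \pmod p$, the congruence either holds for all $p^2$ pairs (when $\alpha \equiv 0 \pmod p$, giving case 1) or for no pair at all (when $\alpha \not\equiv 0 \pmod p$, giving case 2). Otherwise at least one of $A, B$ is a unit modulo $p$, and the equation cuts out an affine line in $\mathbb{F}_p^2$ with exactly $p$ solutions, yielding case 3. The only step that needs genuine care is verifying that all cross-terms in the expansion of $f(b_k, c_k)$ are indeed killed modulo $p^{k+1}$; the whole argument rests on the $p^{2k}$ bound, which holds uniformly in $p$ and accounts for why no restriction on the prime needs to be imposed here.
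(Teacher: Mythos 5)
Your proposal is correct and follows essentially the same route as the paper: expand $f(b_{k-1}+p^k i_k,\, c_{k-1}+p^k j_k)$ modulo $p^{k+1}$, kill the $p^{2k}$ cross-terms, and reduce the labelling question to the linear congruence $\alpha + f_x(i_0,j_0)\,i_k + f_y(i_0,j_0)\,j_k \equiv 0 \pmod p$. Your uniform trichotomy for a linear equation over $\mathbb{F}_p$ (both coefficients zero versus at least one a unit) is in fact a cleaner organization than the paper's proof, which writes out the coefficients explicitly and then argues separately for $p=2$, $p=3$, and $p>3$ with sub-cases on whether $i_0$ or $j_0$ vanishes, none of which is actually needed once the congruence is phrased via the partial derivatives.
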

\begin{proof}
Let $f(x,y) = \sum_{l+m \leq 3} a_{lm}x^ly^m, l, m \in \mathbb{Z}$ be the general degree-$2$ polynomial in two variables and $(b_{k-1},c_{k-1})$ be associated with the vertex $v$ at the $k$-th level of the valuation tree.  So we have $$
   f(b_{k-1},c_{k-1}) \equiv 0 \pmod {p^k}, $$ where
    $b_{k}= (i_{k}i_{k-1}\cdots i_1i_0)_p  = b_{k-1} + p^ki_k,
      c_{k}= (j_{k}j_{k-1}\cdots j_1j_0)_p = b_{k-1} + p^ki_k$. Here $i_{0},i_{1},\dots, i_{k}, j_{0},j_{1},\dots,j_{k} \in \{0,1,\dots,p-1\}$ and $(b_0,c_0) = (i_0,j_0)$. We want to find $(i_k,j_k)$ such that \begin{equation}
   f(b_k,c_k) \equiv 0 \pmod {p^{k+1}}. 
    \end{equation} On putting the expression for $(b_k,c_k)$ into above equation we get
    \begin{equation}
    \sum_{l+m \leq 2} a_{lm}(b_{k-1} + p^ki_k)^l(c_{k-1} + p^kj_k)^m \equiv 0 \pmod p^{k+1}
    \end{equation} On solving above equation and using $f(b_{k-1}, c_{k-1}) = \alpha p^k$ for some integer $\alpha$, we get \begin{equation}\label{equ 11}
    \begin{split}
    \alpha + (3a_{30}i_0^2 + 2a_{12}i_0j_0 + a_{12}j_0^2 + 2a_{20}i_0 + a_{11}j_0 + a_{10})i_k +  \\
     (3a_{03}j_0^2 + 2a_{21}i_0j_0 + a_{21}i_0^2 + 2a_{02}j_0 + a_{11}i_0 + a_{01})j_k \equiv 0 \pmod p
    \end{split}
    \end{equation}
First we give proof for the prime $p=2$.  In this case, equation \eqref{equ 11} becomes \begin{equation}\label{equ 12}
\alpha + (a_{30}i_0 + a_{12}j_0  + a_{11}j_0 + a_{10})i_k + 
     (a_{03}j_0 + a_{21}i_0  + a_{11}i_0 + a_{01})j_k \equiv 0 \pmod p.
\end{equation}Again we can split this case into following sub-cases:
\begin{itemize}
\item[Case 1.] If $(i_0,j_0) = (0,0)$ then the above equation becomes \begin{equation}
\alpha + a_{10}i_k + a_{01}j_k \equiv 0 \pmod p.
\end{equation}In this case if atleast one of $a_{10}$ or $a_{01}$ has $p$-adic valuation zero, then the above equation has $p$ solutions. Hence there are $p$ nodes with label $*$ and $p^2-p$ with label $k+1$ at the level $k+1$. Further if $v_p(a_{10})$ and $v_p(a_{01})$ are greater than one then depending upon $\alpha$, either all nodes are labelled with $*$ or $k+1$ at the level $k+1$.
\item[Case 2.] If $(i_0,j_0) \neq (0,0)$ and atleast one of the coefficient is not divisible by $p$ then exactly $p$ nodes are labelled by $*$ and rest are labelled by $k+1$. 
\item[Case 3.] If $(i_0,j_0) \neq (0,0)$ and all coefficients are divisible by $p$ then equation \eqref{equ 12} becomes \begin{equation}
\alpha \equiv 0 \pmod p.
\end{equation}Hence in this case either all nodes are labelled with $*$ or all are labelled with $k+1$. 
\end{itemize}
These three constitute the proof for the prime $p=2$. Now we give the proof for the $p=3$. In this case equation \eqref{equ 11} becomes \begin{equation}\label{equ 15}
\alpha + (2a_{12}i_0j_0 + a_{12}j_0^2 + 2a_{20}i_0 + a_{11}j_0 + a_{10})i_k + 
     (2a_{21}i_0j_0 + a_{21}i_0^2 + 2a_{02}j_0 + a_{11}i_0 + a_{01})j_k \equiv 0 \pmod p 
\end{equation}We split this case into the following sub-cases:\begin{itemize}
\item[Case 1.] If $(i_0,j_0) = (0,0)$, then above equation becomes\begin{equation}
\alpha + a_{10}i_k + a_{01}j_k \equiv 0 \pmod p.
\end{equation}Now if $\nu_p(a_{10}) > 0$ and $ \nu_p(a_{01})>0$, then all nodes at level $k+1$ are labelled with $k+1$ or all nodes are labelled with $*$.  If either $\nu_p(a_{10}) = 0$ or $ \nu_p(a_{01})= 0$ or both are zero then, then there are exactly $p$ nodes with label $*$ and the rest $p^2-p$ are labelled with $k+1$.
\item[Case 2.] If $ i_0 = 0$ and $ j_0 \neq 0$ then equation \eqref{equ 15} has $p$ solutions for $(i_k,j_k)$ if atleast one of coefficients $a_{ij}$ has valuation greater than zero, thus there are $p$ nodes with label $*$ and $p^2-p$ nodes with label $k+1$. If $\nu_p(a_{ij}) > 0$ then all nodes at level $k+1$ are either labelled with $*$ or all are labelled with $k+1$.
\item[Case 3] Similar reasoning for the case $ j_0 = 0$ and $ i_0 \neq 0$ gives the labels of children of the vertex $v$. 
\end{itemize}
Now we give the proof for the general prime $p > 3$.  We split this case into three cases: \begin{itemize}
\item[Case 1.] If $(i_0,j_0) = (0,0)$, then equation \eqref{equ 11} becomes\begin{equation}
\alpha + a_{10}i_k + a_{01}j_k \equiv 0 \pmod p.
\end{equation}Now if $\nu_p(a_{10}) > 0$ and $ \nu_p(a_{01})>0$, then all nodes at level $k+1$ are labelled with $k+1$ or all nodes are labelled with $*$.  If either $\nu_p(a_{10}) = 0$ or $ \nu_p(a_{01})= 0$ or both are zero then, then there are exactly $p$ nodes with label $*$ and the rest $p^2-p$ are labelled with $k+1$.
\item[Case 2.] If $ i_0 = 0$ and $ j_0 \neq 0$ then equation \eqref{equ 11} has $p$ solutions for $(i_k,j_k)$ if atleast one of coefficients $a_{ij}$ has valuation greater than zero, thus there are $p$ nodes with label $*$ and $p^2-p$ nodes with label $k+1$. If $\nu_p(a_{ij}) > 0$ then all nodes at level $k+1$ are either labelled with $*$ or all are labelled with $k+1$.
\item[Case 3] Similar reasoning for the case $ j_0 = 0$ and $ i_0 \neq 0$ gives the labels of children of the vertex $v$. 
\end{itemize}    
\end{proof}

	\makeatletter
	\renewcommand{\@biblabel}[1]{[#1]\hfill}
	\makeatother

\end{document}